\documentclass{amsart}

\usepackage{amsfonts}
\usepackage{amsmath}
\usepackage{amsrefs}
\usepackage{amssymb}
\usepackage{amsthm}
\usepackage[english]{babel}
\usepackage{color}
\usepackage{epsfig}
\usepackage{enumerate}
\usepackage{enumitem}
\usepackage{graphicx}
\usepackage{hyperref}
\usepackage[utf8]{inputenc}
\usepackage{mathrsfs}
\usepackage{multicol}
\usepackage{nicefrac}
\usepackage{psfrag}
\usepackage{tikz-cd}
\usepackage{ushort}

\hypersetup{
    colorlinks,
    citecolor=blue, 
    filecolor=blue,
    linkcolor=blue,
    urlcolor=blue
}

\newtheoremstyle{paragraph}{}{}{\normalfont}{}{}{}{\parindent}{}
\theoremstyle{plain}
\newtheorem{thm}{Theorem}[section]

\newtheorem{lem}[thm]{Lemma}
\newtheorem{prop}[thm]{Proposition}

\theoremstyle{definition}
\newtheorem{df}[thm]{Definition}
\theoremstyle{remark}

\newtheorem{conj}[thm]{Conjecture}

\newtheorem{ex}[thm]{Example}
\theoremstyle{paragraph}

\newcommand{\Z}{\mathbb Z}
\newcommand{\N}{\mathbb N}

\DeclareMathOperator{\twedge}{\textstyle\bigwedge}

\DeclareMathOperator{\toplus}{\textstyle\bigoplus}

\DeclareMathOperator{\tcap}{\textstyle\bigcap}

\DeclareMathOperator{\tsum}{\textstyle\sum}
\DeclareMathOperator*{\dsum}{\displaystyle\tsum}

\makeatletter
\providecommand{\bigsqcap}{%
  \mathop{%
    \mathpalette\@updown\bigsqcup
  }%
}
\newcommand*{\@updown}[2]{%
  \rotatebox[origin=c]{180}{$\m@th#1#2$}%
}
\makeatother

\makeatletter
\DeclareFontFamily{U}  {MnSymbolF}{}
\DeclareSymbolFont{symbolsMN}{U}{MnSymbolF}{m}{n}
\SetSymbolFont{symbolsMN}{bold}{U}{MnSymbolF}{b}{n}
\DeclareFontShape{U}{MnSymbolF}{m}{n}{
    <-6>  MnSymbolF5
   <6-7>  MnSymbolF6
   <7-8>  MnSymbolF7
   <8-9>  MnSymbolF8
   <9-10> MnSymbolF9
  <10-12> MnSymbolF10
  <12->   MnSymbolF12}{}
\DeclareFontShape{U}{MnSymbolF}{b}{n}{
    <-6>  MnSymbolF-Bold5
   <6-7>  MnSymbolF-Bold6
   <7-8>  MnSymbolF-Bold7
   <8-9>  MnSymbolF-Bold8
   <9-10> MnSymbolF-Bold9
  <10-12> MnSymbolF-Bold10
  <12->   MnSymbolF-Bold12}{}
\DeclareMathSymbol{\tbigtimes}{\mathop}{symbolsMN}{2}
\newcommand*{\bigtimes}{%
  \DOTSB
  \tbigtimes
  \slimits@ 
}
\makeatother

\makeatletter
\newsavebox\myboxA
\newsavebox\myboxB
\newlength\mylenA

\newcommand*\xbar[2][0.75]{%
    \sbox{\myboxA}{$\m@th#2$}%
    \setbox\myboxB\null
    \ht\myboxB=\ht\myboxA%
    \dp\myboxB=\dp\myboxA%
    \wd\myboxB=#1\wd\myboxA
    \sbox\myboxB{$\m@th\overline{\copy\myboxB}$}
    \setlength\mylenA{\the\wd\myboxA}
    \addtolength\mylenA{-\the\wd\myboxB}%
    \ifdim\wd\myboxB<\wd\myboxA%
       \rlap{\hskip 0.5\mylenA\usebox\myboxB}{\usebox\myboxA}%
    \else
        \hskip -0.5\mylenA\rlap{\usebox\myboxA}{\hskip 0.5\mylenA\usebox\myboxB}%
    \fi}
\makeatother

\title{Algebraic expansivity on abelian groups}
\author{Mauricio Achigar}
\date{\today}
\thanks{Partially supported by SNI-ANII (Uruguay) grant SNI\_2023\_1\_1013336.}

\begin{document}

\begin{abstract}
 Building on the author’s earlier work on topological and abstract expansivity, this paper introduces and explores the notion of \emph{algebraic expansivity} for endomorphisms of abelian groups. We analyze the fundamental properties of this algebraic analogue, establish its relationship with Weiss's algebraic entropy, and prove that positively expansive epimorphisms are necessarily restricted to finite systems. Finally, we demonstrate a robust connection with topological dynamics via Pontryagin duality: algebraic expansivity on torsion abelian groups is shown to be exactly the dual property of topological expansivity on totally disconnected compact groups.
\end{abstract}

\maketitle


\section{Introduction}

The study of expansive dynamical systems dates back to the work of Utz \cite{Utz50} in 1950, where \emph{expansive homeomorphisms}\footnote{This term was coined in \cite{GoH55}*{\S31.10}} were first introduced under the name \emph{unstable homeomorphisms}. A homeomorphism $f\colon X\to X$ of a metric space $(X,d)$ is expansive if there exists $c>0$ (called an \emph{expansivity constant}) such that
\begin{equation}\label{def:homeoexp_1}
 \text{if $x,y\in X$ and $d(f^kx,f^ky)\leq c$ for all $k\in\Z$ then $x=y$.}
\end{equation}

Although the theory has been extensively developed in different directions and contexts—for example, for smooth manifolds, topological groups, continuous flows, and variations such as $cw$-expansivity or entropy-expansivity, just to mention a few—in this introduction, we are mainly interested in tracing the evolution of the above definition within the context of compact topological spaces and how it is carried over to the algebraic setting of abelian groups.

In his 1960 paper \cite{Bry60}, Bryant gave a definition of expansivity for a homeomorphism $f$ of a (compact) uniform space $(X,\mathscr{U})$. The homeomorphism $f$ is expansive if there exists $\mathcal U\in\mathscr U$ (called an \emph{expansive index}) such that
\begin{equation}\label{def:homeoexp_2}
 \text{if $x,y\in X$ and $(f^kx,f^ky)\in\overline{\mathcal U}$ for all $k\in\Z$ then $x=y$,}
\end{equation}
where $\overline{\mathcal U}$ denotes the closure of $\mathcal U$ in $X\times X$. Note that (\ref{def:homeoexp_2}) is equivalent to 
\begin{equation*}\label{def:homeoexp_2'}
 \textstyle\bigcap_{k\in\Z}(f\times f)^k(\overline{\mathcal U})=\Delta_X, \tag{2'}
\end{equation*}
where $\Delta_X$ denotes the diagonal of $X\times X$.
In 1987, Fried \cite{Fri87} extended Bryant's definition to arbitrary (compact) spaces by dropping the requirement for an underlying uniform structure and requiring only that $\mathcal U$ be a neighborhood of $\Delta_X$.

Another extension of Bryant's definition to general (compact) spaces was formulated by Keynes and Robertson \cite{KeR69} in 1969. A homeomorphism $f\colon X\to X$ is expansive if there exists an open cover $\alpha$ (called a \emph{generator}) such that
\begin{equation}\label{def:homeoexp_3}
 \text{if $x,y\in X$ and $\{f^kx,f^ky\}\prec\overline{\alpha}$ for all $k\in\Z$ then $x=y$,}
\end{equation}
where $\overline{\alpha}=\{\overline U:U\in\alpha\}$, and $\{f^kx,f^ky\}\prec\overline{\alpha}$ means that $\{f^kx,f^ky\}\subseteq \overline U$ for some $U\in\alpha$. It can be shown that in this definition, (\ref{def:homeoexp_3}) can be replaced by 
\begin{equation*}\label{def:homeoexp_3'}
 \textstyle\twedge_{k\in\Z}f^k(\overline\alpha)=\delta_X, \tag{3'}
\end{equation*}
where $\delta_X$ is the cover of $X$ by its singletons\footnote{To be precise $\delta_X=\bigl\{\{x\}:x\in X\bigr\}\cup\{\varnothing\}$.} and $\wedge$ represents the operation on families of subsets of $X$ given by $\{A_i\}\wedge\{B_j\}=\{A_i\cap B_j\}$.

While the various definitions considered above are known to be equivalent for compact spaces, the formulation by Keynes and Robertson using open covers highlighted a connection that deeply linked expansivity and entropy. They discovered that, heuristically speaking, expansivity is equivalent to possessing a generator for the entropy---a generator whose orbit is sufficiently representative of the dynamics, in such a way that its  entropy coincides with the total entropy of the system.

In their seminal work where \emph{topological entropy} was first introduced via open covers, Adler, Konheim, and McAndrew \cite{AKM65} remarked:
\vspace{1.5mm}
\begin{center}
 \begin{minipage}{.92\textwidth}
  \emph{\quad The notion of entropy has an abstract formulation which we have not dealt with here. It can be tailored to fit mappings on other mathematical structures.}
 \end{minipage}
\end{center}
\vspace{1.5mm}
They explicitly suggested that, for the case of abelian groups, one could replace open covers with subgroups, among other modifications, thereby foreshadowing a theory analogous to topological entropy within an algebraic framework.

The challenge posed by Adler, Konheim, and McAndrew was addressed by Weiss \cite{Wei74} in 1974, who conducted the first systematic study of what he termed \emph{algebraic entropy} in the context of abelian groups. Notably, Weiss proved that the algebraic entropy of an automorphism of a torsion group coincides with the topological entropy of its Pontryagin dual. Furthermore, Lindenstrauss and Weiss \cite{LiW00} introduced in 2000 the \emph{mean dimension}---a topological invariant for homeomorphisms of compact spaces suggested by Gromov---whose theory was developed through techniques closely related to those of Adler et al.

More recently, Dikranjan and Giordano Bruno \cites{DGB13, DGB19} provided an abstract construction of entropy in the context of general normed semigroups, called \emph{semigroup entropy}. This abstract generalization has proven to be remarkably fruitful, encompassing, as special cases, topological entropy, Weiss' algebraic entropy, and mean dimension, among many other concrete examples.

Given the aforementioned generalizations of topological entropy and the insights provided by Keynes and Robertson, it is natural to pursue analogous generalizations for the notion of expansivity, while preserving the fundamental connection between these two concepts. To achieve this, one must first establish a definition of topological expansivity that avoids any reference to individual points or the closure of subsets, relying instead solely on more primitive structures, such as neighborhoods of the diagonal or open covers—much like the definition of topological entropy itself.

A first attempt might involve considering weakened versions of (2) and (3) by omitting the closures in those formulas, closures which correspond to the $\leq$ in (1). However, while a weakened version of Utz's definition---using $<$ instead of $\leq$ in (1)---is clearly equivalent to the original by suitably adjusting the constant $c$, the weakened versions of the definitions of Bryant, Fried, or Keynes-Robertson—using (2) or (3) without closures—also prove to be equivalent to the original ones (for compact Hausdorff spaces) by appropriately adjusting $\mathcal{U}$ or $\alpha$; nevertheless, this approach still fails to eliminate the inherent dependence on the points of the space.

A solution to the problem of finding a suitable definition was provided in the author's PhD thesis \cite{Ach19} and the paper \cite{AAM16} derived from its first chapter, where a new definition of expansivity on compact spaces was introduced under the name \emph{refinement expansivity}. This definition is based on generalizing the property of \emph{uniform expansivity} discovered by Bryant \cite{Bry60}, which is equivalent to the classical notion of expansivity: if $f\colon X\to X$ is an expansive homeomorphism as in Utz's definition, then for all $\varepsilon>0$ there exists $N\in\N$ such that
\begin{equation}\label{def:homeoexpunif}
 \text{if $x,y\in X$ and $d(f^kx,f^ky)\leq c$ for all $|k|\leq N$ then $d(x,y)\leq\varepsilon$,} 
\end{equation}
where $c>0$ is an expansivity constant.

The above condition, when expressed in terms of open covers\footnote{With an open cover $\alpha$ instead of $c$, and an open cover $\beta$ instead of $\varepsilon$.} in the same spirit that Keynes and Robertson formulated (3) or (3') from (1)---but omitting closures---translates into the following requirement on a homeomorphism $f$ acting on a compact space $X$: there exists an open cover $\alpha$ of $X$ with the property that for all open covers $\beta$ of $X$ there exists $N\in\mathbb{N}$ such that
\begin{equation}\label{def:homeorefexp}
 \text{if $x,y\in X$ and $\{f^kx,f^ky\}\prec\alpha$ for all $|k|\leq N$ then $\{x,y\}\prec\beta$.} 
\end{equation}
The definition of refinement expansivity is then obtained by rewriting (5) as
\begin{equation}\label{def:homeorefexp'}
 \twedge_{|k|\leq N}f^k(\alpha)\prec\beta, \tag{5'}
\end{equation}
where the notation $\{A_i\}\prec\{B_j\}$ means that each $A_i$ is a subset of some $B_j$.

Moreover, it is worth noting that the property defining a refinement expansive homeomorphism first appeared in the literature in the work of Keynes and Robertson; it is precisely the conclusion of Lemma 2.5 in \cite{KeR69}, where this property is shown to be satisfied by every generator. However, a simple inspection of their proofs reveals that the full strength of being a generator is not required to ensure that the entropy of a cover equals the total topological entropy of the system. Indeed, this refinement property is the only condition used to prove that this equality holds. This observation guarantees that refinement expansivity relates to entropy in the same way as the classical notion of expansivity.

The concept of refinement expansivity has shown itself to be particularly fertile, not only because it extends the classical notion—allowing for the study of new expansive dynamics such as the so-called non-Hausdorff shifts \cite{AAM16}—but also because it provided a foundation for generalizing the concept. This led to a definition of \emph{abstract expansivity}, akin to the framework of Dikranjan and Giordano Bruno \cites{DGB13, DGB19}, which is developed in a manuscript currently under review \cite{Ach23}. In that work, the concept of abstract expansivity is introduced alongside an abstract version of entropy.

By specializing this abstract framework to the context of abelian groups as indicated in \cite{Ach23}*{\S4.5.3}, we arrive at the \emph{algebraic expansivity} discussed in the present paper. This can also be achieved more directly by departing from refinement expansivity and following the approach used for Weiss's algebraic entropy by replacing open covers with subgroups, precisely as suggested by the foresight of Adler, Konheim, and McAndrew \cite{AKM65}. Perhaps unsurprisingly, it turns out that the resulting notion of algebraic expansivity for torsion abelian groups is the Pontryagin dual of the topological expansivity of the dual map.

\medskip

The organization of this paper is as follows. In Section~2, we introduce the definition of algebraic expansivity and present some of its basic properties. Section~3 is devoted to the relation between algebraic expansivity and Weiss's algebraic entropy. In Section~4, we examine positively expansive epimorphisms, showing that such systems must be finite. Finally, Section~5 deals with Pontryagin duality, relating algebraic expansivity to the topological expansivity on the dual group.

\section{Algebraic expansivity}

Throughout this work, all groups are assumed to be abelian unless otherwise stated.

\begin{df}\label{df:expansive}
 Let $\varphi\colon G\to G$ be an endomorphism of an abelian group $G$.
 
 We say that $\varphi$ is \emph{positively expansive} if there exists a finite subgroup $S\leq G$, called a \emph{positive generator} for $\varphi$, with the following property: for every finite subgroup $F\leq G$, there exists $n\in\N$ such that $F\subseteq\sum_{k=0}^n\varphi^kS$. In that case, we also say that $\varphi$ is \emph{positively $S$-expansive}.
 
 On the other hand, when $\varphi$ is an automorphism, if there is a finite subgroup $S\leq G$ such that for any given finite subgroup $F\leq G$ we have $F\subseteq\sum_{|k|\leq n}\varphi^kS$ for some $n\in\N$, we then say that $\varphi$ is \emph{expansive} (or \emph{$S$-expansive}) and $S$ is called a \emph{generator} for $\varphi$.
\end{df}

Note that the notion of expansivity also makes sense for general endomorphisms if $\varphi^kH$ is interpreted as a preimage for negative integers $k$. Nevertheless, we shall restrict this definition to the case of automorphisms, as stated above.

Let $t(G)$ denote the torsion part of $G$. Since $t(G)$ is the union of all finite subgroups of $G$, and finite sums of the form $\sum_k\varphi^kS$ are always contained in $t(G)$ when $S$ is a finite subgroup of $G$, expansivity and positive expansivity can be reformulated as follows.

\begin{prop}
 Let $\varphi\colon G\to G$ be an endomorphism, $\varphi|_{t(G)}\colon t(G)\to t(G)$ its restriction to $t(G)$, and $S$ a finite subgroup of $G$. The following statements hold.
 \begin{enumerate}
  \item $S$ is a positive generator for $\varphi$ if and only if $\tsum_{k\in\N}\varphi^kS=t(G)$.
  \item $\varphi$ is positively $S$-expansive if and only if $\varphi|_{t(G)}$ is positively $S$-expansive.
 \end{enumerate}
Furthermore, if $\varphi$ is an automorphism, the following also hold.
 \begin{enumerate}\setcounter{enumi}{2}
  \item $S$ is a generator for $\varphi$ if and only if $\tsum_{k\in\Z}\varphi^kS=t(G)$.
  \item $\varphi$ is $S$-expansive if and only if $\varphi|_{t(G)}$ is $S$-expansive.
 \end{enumerate}
\end{prop}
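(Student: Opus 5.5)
The argument rests on two elementary facts. First, since $G$ is abelian, $t(G)$ is a subgroup and is the union of all finite subgroups of $G$: every finite subgroup consists of torsion elements. Conversely, every torsion element $x$ lies in the finite cyclic subgroup $\langle x\rangle$, and any finitely many torsion elements generate a finite subgroup. Second, for a finite subgroup $S$ and any $k$ (with $k\in\Z$ when $\varphi$ is an automorphism), $\varphi^kS$ is a homomorphic image of $S$ and hence a finite subgroup. Any finite sum of finite subgroups of an abelian group is again a finite subgroup, so every partial sum $\sum_{k=0}^n\varphi^kS$ or $\sum_{|k|\le n}\varphi^kS$ is a finite subgroup contained in $t(G)$. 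Also, $\varphi$ maps $t(G)$ into itself, and if $\varphi$ is an automorphism then $\varphi^{-1}$ does too, so $\varphi|_{t(G)}$ is an automorphism of $t(G)$.

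For (1), I would observe that $\tsum_{k\in\N}\varphi^kS$ is the increasing union of the partial sums $\sum_{k=0}^n\varphi^kS$. By the facts above this union is contained in $t(G)$, so equality with $t(G)$ just means that $t(G)$ is contained in the union.

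Suppose first that $S$ is a positive generator. Each $x\in t(G)$ lies in the finite subgroup $F=\langle x\rangle$, which is contained in some partial sum, so $t(G)$ is contained in the union.

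Conversely, suppose the union equals $t(G)$, and let $F$ be a finite subgroup of $G$. Then $F\subseteq t(G)$. Each of the finitely many elements of $F$ lies in some partial sum, and since the partial sums increase with $n$, taking the maximal index gives a single $n$ with $F\subseteq\sum_{k=0}^n\varphi^kS$. This finiteness-plus-monotonicity step is the only point requiring any care.

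Statement (3) is proved identically, using the increasing partial sums $\sum_{|k|\le n}\varphi^kS$.

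For (2) and (4), I would note that both the defining property and the criteria in (1) and (3) involve only finite subgroups and the sets $\varphi^kS$. The finite subgroups of $G$ are exactly the finite subgroups of $t(G)$. Since $S\subseteq t(G)$, the sets $\varphi^kS$ are the same whether computed with $\varphi$ or with $\varphi|_{t(G)}$, and this includes negative $k$, because $\varphi|_{t(G)}$ is an automorphism of $t(G)$. Finally, $t(t(G))=t(G)$. Hence the two conditions coincide verbatim, and (2) and (4) follow either directly from the definition or by applying (1) and (3) to both maps.
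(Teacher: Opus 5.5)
Your proposal is correct and follows the same route as the paper, which justifies the proposition only by the preceding remark that $t(G)$ is the union of all finite subgroups of $G$ and that finite sums $\sum_k\varphi^kS$ lie in $t(G)$. You supply the details the paper leaves implicit: the partial sums increase, a finite $F$ therefore fits in a single partial sum, and $\varphi|_{t(G)}$ is an automorphism of $t(G)$, so negative powers of $\varphi$ and of $\varphi|_{t(G)}$ give the same sets $\varphi^kS$.
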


\begin{ex}\label{ex:bernoullishifts}
 The standard \emph{Bernoulli shifts} provide the basic examples of algebraic expansivity and positive expansivity. Let $I$ denote the set $\N$ or $\Z$. Given a finite group $S$, let $S^{(I)}=\bigoplus_{k\in I}S_k$ be the direct sum of copies $S_k=S$. Define the (forward) \emph{shift} map $\sigma\colon S^{(I)}\to S^{(I)}$ by $\sigma(s_k)=(s_{k-1})$ for $(s_k)\in S^{(I)}$, where for $I=\N$ we agree that $s_{-1}=0$. Consider also the subgroup $S_0\leq S^{(I)}$ defined as $S_0=\{(s_k)\in S^{(I)}:s_k=0\text{ if }k\neq0\}$ which clearly can be identified with $S$. Then it is easily checked that for $I=\N$, the (unilateral) shift is a positively expansive monomorphism  with positive generator $S_0$. Analogously, for $I=\Z$, the (bilateral) shift is an expansive automorphism with generator $S_0$.
\end{ex}

\begin{df}
 Let $\varphi\colon G \to G$ and $\psi\colon H \to H$ be endomorphisms. We say that $\psi$ is a \emph{factor} of $\varphi$ (or that $\varphi$ is \emph{semiconjugate} to $\psi$) if there exists an epimorphism $\pi\colon G \to H$ such that $\pi\circ\varphi=\psi\circ\pi$. The map $\pi$ is called a \emph{semiconjugacy}. If, in addition, $\pi$ is an isomorphism, we say that $\varphi$ and $\psi$ are conjugate, and $\pi$ is called a \emph{conjugation}.
\end{df}

\begin{prop}\label{prop:conjugacy}
 The following properties hold.
 \begin{enumerate}
  \item Any factor of a positively expansive endomorphism of a torsion group is positively expansive.
  \item Any factor automorphism of an expansive automorphism of a torsion group is expansive.
  \item Expansivity and positive expansivity are conjugacy invariants.
 \end{enumerate}
\end{prop}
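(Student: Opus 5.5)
The plan is to push the generator forward along the semiconjugacy. Let $\pi\colon G\to H$ be an epimorphism with $\pi\circ\varphi=\psi\circ\pi$, where $G$ is torsion and $\varphi$ is positively $S$-expansive. I propose $\pi S$ as a positive generator for $\psi$. It is a finite subgroup of $H$, being the homomorphic image of a finite subgroup.

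For (1), I will use the reformulation in the preceding proposition. By part (1) of that proposition, $\sum_{k\in\N}\varphi^kS=t(G)=G$, since $G$ is torsion. Because $\pi$ is a homomorphism that intertwines $\varphi$ and $\psi$, induction gives $\pi\varphi^k=\psi^k\pi$ for all $k\in\N$. Hence
\[
\pi\Bigl(\tsum_{k\in\N}\varphi^kS\Bigr)=\tsum_{k\in\N}\pi\varphi^kS=\tsum_{k\in\N}\psi^k(\pi S).
\]
The left side equals $\pi G=H$ because $\pi$ is onto. Moreover, $H$ is torsion, being a quotient of a torsion group, so $t(H)=H$. Part (1) of the preceding proposition then shows that $\pi S$ is a positive generator for $\psi$. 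Alternatively, one can argue directly: given a finite $F\le H$, lift finitely many generators of $F$ to elements of $G$. These lifts generate a finite subgroup $F'\le G$, since $G$ is torsion. Choose $n$ with $F'\subseteq\sum_{k=0}^n\varphi^kS$ and apply $\pi$.

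For (2), the argument is the same. Here $\varphi$ and $\psi$ are automorphisms, so $\pi\varphi^{-1}=\psi^{-1}\pi$, which follows from $\pi\varphi=\psi\pi$ by composing with the inverses. Thus $\pi\varphi^k=\psi^k\pi$ for all $k\in\Z$. Part (3) of the preceding proposition now applies exactly as above, with $\Z$ in place of $\N$.

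For (3), the torsion hypothesis can no longer be assumed, so I will go through the restrictions to torsion parts. A conjugation $\pi$ is an isomorphism, so it maps $t(G)$ onto $t(H)$ and conjugates $\varphi|_{t(G)}$ with $\psi|_{t(H)}$. By (1) and (2), expansivity passes from $\varphi|_{t(G)}$ to $\psi|_{t(H)}$. Part (2), respectively part (4), of the preceding proposition transfers this back to $\psi$ itself; symmetry via $\pi^{-1}$ gives the converse. Alternatively, one can observe directly that $\pi S$ is a generator, since an isomorphism preserves all the relevant inclusions.

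The only delicate point is the role of the torsion hypothesis in (1) and (2): it is what allows a finite subgroup of $H$ to be lifted into a finite subgroup of $G$. Phrasing everything as the equality $\sum\varphi^kS=t(G)=G$ handles this cleanly, and beyond that the proof is routine.
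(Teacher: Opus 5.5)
Your proposal is correct and follows the paper's proof: you push the generator forward to $\pi S$ and use $\sum_k\psi^k\pi S=\pi\sum_k\varphi^kS=\pi G=H$, with $H$ torsion as a quotient of $G$. In (3) you pass through $t(G)$ and $t(H)$, since no torsion hypothesis is given there; this fills in what the paper leaves as ``follow similarly''.
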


\begin{proof}
 We will show only property (1) as the others follow similarly. Let $\varphi\colon G\to G$ be a positively expansive endomorphism and let $\pi\colon G\to H$ be a semiconjugacy to an endomorphism $\psi\colon H\to H$. Note that, since $G$ is torsion and $\pi$ is surjective, $H$ is torsion as well. Let $S\leq G$ be a positive generator for $\varphi$. Then
 \[
  \tsum_{k\in\N}\psi^k\pi S
  =\tsum_{k\in\N}\pi\varphi^kS
  =\pi\tsum_{k\in\N}\varphi^kS
  =\pi G
  =H,
 \]
 showing that $\psi$ is positively expansive with positive generator $\pi S$.
\end{proof}

\begin{prop}
  Let $G$ be a torsion group and $\varphi\colon G\to G$ an endomorphism. Then $\varphi$ is positively expansive if and only if it is a factor of a unilateral Bernoulli shift as defined in Example \ref{ex:bernoullishifts}. An analogous statement holds for the expansivity of automorphisms and bilateral Bernoulli shifts.
\end{prop}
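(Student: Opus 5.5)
One direction follows from the earlier results. By Example~\ref{ex:bernoullishifts}, the unilateral shift $\sigma\colon S^{(\N)}\to S^{(\N)}$ is positively expansive with positive generator $S_0$. Since $S$ is finite, $S^{(\N)}$ is a torsion group. Proposition~\ref{prop:conjugacy}(1) then shows that every factor of $\sigma$ is positively expansive. The bilateral case is handled the same way with Proposition~\ref{prop:conjugacy}(2), using that the factor $\psi$ is assumed to be an automorphism.

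For the converse, let $G$ be torsion and $\varphi$ positively $S$-expansive. The first step uses part (1) of the proposition preceding Example~\ref{ex:bernoullishifts}, together with $t(G)=G$, to get $\sum_{k\in\N}\varphi^kS=G$. The second step is to take the Bernoulli shift over the same finite group $S$ and define
\[
 \pi\colon S^{(\N)}\to G,\qquad \pi\bigl((s_k)\bigr)=\tsum_{k\in\N}\varphi^k s_k .
\]
This sum is finite because only finitely many $s_k$ are nonzero. The map $\pi$ is a homomorphism because $G$ is abelian and each $\varphi^k$ is a homomorphism. Its image is exactly $\sum_{k\in\N}\varphi^kS=G$, so $\pi$ is an epimorphism. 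The third step checks the intertwining relation, using that $\sigma$ shifts indices and $s_{-1}=0$:
\[
 \pi\sigma\bigl((s_k)\bigr)=\tsum_{k\ge1}\varphi^k s_{k-1}=\varphi\tsum_{k\ge0}\varphi^k s_k=\varphi\pi\bigl((s_k)\bigr).
\]
Hence $\pi\circ\sigma=\varphi\circ\pi$, and $\varphi$ is a factor of the unilateral shift.

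In the bilateral case, $\varphi$ is an $S$-expansive automorphism, and part (3) of the same proposition gives $\sum_{k\in\Z}\varphi^kS=G$. I would define $\pi\bigl((s_k)\bigr)=\sum_{k\in\Z}\varphi^k s_k$ on $S^{(\Z)}$. Negative powers make sense because $\varphi$ is an automorphism. The same computation, now with no boundary term, gives $\pi\sigma=\varphi\pi$, and $\pi$ is onto by the generating property.

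There is no serious obstacle in this argument. The only point needing care is the bookkeeping in the intertwining computation: the boundary convention $s_{-1}=0$ for $I=\N$, and invertibility of $\varphi$ for $I=\Z$. I would also note that the torsion hypothesis is used in both directions: it makes $\pi$ surjective onto all of $G$, and it is needed in order to apply Proposition~\ref{prop:conjugacy}.
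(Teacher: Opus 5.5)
Your proof is correct and is essentially the paper's: you use the same map $\pi((s_k))=\sum_k\varphi^k s_k$ (the paper's $q_\varphi$), which is surjective because $\sum_k\varphi^kS=G$, you do the same intertwining check, and you get the converse from Proposition~\ref{prop:conjugacy} applied to the torsion group $S^{(I)}$. One small correction to your closing remark: in the converse direction Proposition~\ref{prop:conjugacy} needs $S^{(I)}$ to be torsion, not $G$, and $G$ is then automatically torsion as a quotient of $S^{(I)}$.
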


\begin{proof}
 Given a positively $S$-expansive endomorphism (resp. an $S$-expansive isomorphism) $\varphi\colon G\to G$, let $\sigma\colon S^{(I)}\to S^{(I)}$ be the shift map as in Example \ref{ex:bernoullishifts}, where $I=\N$ (resp. $I=\Z$). Consider the map $q_\varphi\colon S^{(I)}\to G$ given by $q_\varphi(s_k)=\sum_{k\in I}\varphi^ks_k$ for $(s_k)\in S^{(I)}$. Then, since $\sum_{k\in I}\varphi^kS=G$ by expansivity, we have that $q_\varphi$ is an epimorphism. Furthermore, it is straightforward to verify that $q_\varphi\circ\sigma=\varphi\circ q_\varphi$, showing that $q_\varphi$ is a semiconjugacy from $\sigma$ to $\varphi$.
 
 \begin{center}
 \begin{tikzcd}[row sep=large, column sep=large]
  S^{(I)} \arrow[r, "\sigma"] \arrow[d, "q_\varphi"'] & S^{(I)} \arrow[d, "q_\varphi"] \\
  G \arrow[r, "\varphi"]                                     & G
 \end{tikzcd}
\end{center}
 
 Conversely, since the groups $S^{(I)}$ of Example 2.3 are torsion groups (because $S$ is finite), the factors of Bernoulli shifts inherit the corresponding expansivity property by Proposition \ref{prop:conjugacy}.
\end{proof}

\begin{prop}\label{prop:directsum}
 Let $\varphi\colon G \to G$ and $\psi\colon H \to H$ be endomorphisms, and denote by $\varphi\oplus\psi\colon G\oplus H\to G\oplus H$ their direct sum, that is, $(\varphi\oplus\psi)(g,h)=(\varphi g,\psi h)$ if $g\in G$, $h\in H$. Then $\varphi\oplus\psi$ is positively expansive iff $\varphi$ and $\psi$ are positively expansive. An analogous statement holds for expansive automorphisms.
\end{prop}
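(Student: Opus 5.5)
We argue directly from the reformulation in the first proposition of this section: $S$ is a positive generator for an endomorphism $\theta$ of a group $K$ iff $\tsum_{k\in\N}\theta^kS=t(K)$. Two elementary facts about torsion parts are used throughout: $t(G\oplus H)=t(G)\oplus t(H)$, and every finite subgroup of $G\oplus H$ is contained in $\pi_G F\oplus\pi_H F$, where $\pi_G,\pi_H$ are the coordinate projections.

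For the ``if'' direction, let $S_1\leq G$ be a positive generator for $\varphi$ and $S_2\leq H$ a positive generator for $\psi$. Put $S=S_1\oplus S_2$, which is a finite subgroup of $G\oplus H$. Since $(\varphi\oplus\psi)^k(S_1\oplus S_2)=\varphi^kS_1\oplus\psi^kS_2$ and sums of subgroups distribute coordinatewise, we get
\[
\tsum_{k\in\N}(\varphi\oplus\psi)^kS=\tsum_{k\in\N}\varphi^kS_1\oplus\tsum_{k\in\N}\psi^kS_2=t(G)\oplus t(H)=t(G\oplus H).
\]
Hence $S$ is a positive generator for $\varphi\oplus\psi$.

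For the ``only if'' direction, one could pass to torsion parts and apply Proposition~\ref{prop:conjugacy}(1): the projection $t(G)\oplus t(H)\to t(G)$ is a semiconjugacy from $(\varphi\oplus\psi)|_{t(G\oplus H)}$ to $\varphi|_{t(G)}$, and then part (2) of the first proposition lifts the conclusion back to $\varphi$. It is simpler to argue directly. Let $S\leq G\oplus H$ be a positive generator and set $S_1=\pi_G S$, which is finite. Take any finite $F\leq G$. Then $F\oplus 0$ is a finite subgroup of $G\oplus H$, so there is $n$ with $F\oplus0\subseteq\tsum_{k=0}^n(\varphi\oplus\psi)^kS$. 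Since $\pi_G$ is a homomorphism intertwining $\varphi\oplus\psi$ with $\varphi$, applying it gives $F\subseteq\tsum_{k=0}^n\varphi^kS_1$. Thus $\varphi$ is positively $S_1$-expansive, and the same argument with $\pi_H$ shows $\psi$ is positively $\pi_HS$-expansive.

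The automorphism case is identical. Note first that $\varphi\oplus\psi$ is an automorphism iff both $\varphi$ and $\psi$ are, so the statement is meaningful. Then replace the sums $\tsum_{k=0}^n$ by $\tsum_{|k|\leq n}$; the projections commute with negative powers as well, since $\pi_G(\varphi\oplus\psi)^{-1}=\varphi^{-1}\pi_G$.

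There is no serious obstacle. The only point needing care is that in the ``only if'' direction $G$ need not be torsion, so Proposition~\ref{prop:conjugacy} cannot be applied to $G\oplus H$ itself; the direct projection argument above avoids this issue.
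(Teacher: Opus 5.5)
Your proposal is correct and follows essentially the paper's argument: you use the product $S_1\oplus S_2$ of positive generators for the ``if'' direction, and the coordinate projections $\pi_G S$, $\pi_H S$ of a generator for the converse. The paper phrases the converse as enclosing $S$ in some $S_G\oplus S_H$ with $S_G,S_H$ finite. Your explicit projection computation simply spells out the verification the paper leaves to the reader.
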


\begin{proof}
If $\varphi$ and $\psi$ are positively expansive, let $S_G$ and $S_H$ be positive generators for each, respectively. Then $S=S_G\oplus S_H$ is a finite subgroup of $G \oplus H$ which is easily seen to be a positive generator for $\varphi\oplus\psi$. Conversely, as any finite subgroup $S\leq G \oplus H$ is contained in $S_G \oplus S_H$ for some finite subgroups $S_G \leq G$ and $S_H \leq H$, when $S$ is a positive generator for $\varphi\oplus\psi$ we obtain positive generators $S_G$ and $S_H$ for $\varphi$ and $\psi$, respectively.
\end{proof}

\begin{prop}
 Let $G$ be a torsion group such that $G = H + K$ for subgroups $H,K\leq G$, and let $\varphi\colon G\to G$ be an endomorphism. If the restrictions $\varphi|_H$ and $\varphi|_K$ are positively expansive endomorphisms (resp. expansive automorphisms), then $\varphi$ is a positively expansive endomorphism (resp. expansive automorphism).
\end{prop}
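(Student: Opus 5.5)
The plan is to use the sum of the two generators as a generator for $\varphi$. Since the hypothesis says that $\varphi|_H$ and $\varphi|_K$ are endomorphisms (resp. automorphisms) of $H$ and $K$, we have $\varphi H\subseteq H$ and $\varphi K\subseteq K$; in the automorphism case also $\varphi^{-1}H=H$ and $\varphi^{-1}K=K$. Take a positive generator $S_H\leq H$ for $\varphi|_H$ and a positive generator $S_K\leq K$ for $\varphi|_K$, and set $S=S_H+S_K$. This is a finite subgroup of $G$, since it is the image of the finite group $S_H\oplus S_K$ under the addition map.

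Next we compute its orbit sum. Because $H$ and $K$ are torsion, being subgroups of the torsion group $G$, the reformulation in the first proposition of this section gives $\tsum_{k\in\N}(\varphi|_H)^kS_H=t(H)=H$ and likewise for $K$. Since each $\varphi^k$ is a homomorphism, $\varphi^kS=\varphi^kS_H+\varphi^kS_K$. Hence
\[
 \tsum_{k\in\N}\varphi^kS=\tsum_{k\in\N}\varphi^kS_H+\tsum_{k\in\N}\varphi^kS_K=H+K=G=t(G).
\]
By the same proposition, $S$ is a positive generator for $\varphi$.

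The automorphism case is identical with $\Z$ in place of $\N$. The only preliminary point is that $\varphi$ is an automorphism of $G$. For surjectivity, $\varphi G=\varphi H+\varphi K=H+K=G$. Injectivity does not follow from the restrictions alone: for instance, an element $h+k$ with $h\in H$, $k\in K$ could be killed by $\varphi$ even though both restrictions are injective. So I would read the ``resp.'' clause as taking $\varphi$ to be an automorphism of $G$, which is the convention of Definition \ref{df:expansive}; then $\varphi^{-1}$ restricts to the inverses on $H$ and $K$. With this, $\varphi^kS=\varphi^kS_H+\varphi^kS_K$ for all $k\in\Z$, and the same computation gives $\tsum_{k\in\Z}\varphi^kS=G$.

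This interpretive point about injectivity is the only real obstacle; everything else is routine.
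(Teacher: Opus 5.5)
Your proof is correct and is essentially the paper's argument written out directly. The paper realizes $\varphi$ as a factor of $\varphi|_H\oplus\varphi|_K$ via the sum map $H\oplus K\to G$ and invokes Propositions \ref{prop:directsum} and \ref{prop:conjugacy}; that route produces exactly your generator $S_H+S_K$ and your computation. Your caveat about injectivity is also justified: the paper's appeal to Proposition \ref{prop:conjugacy}(2) tacitly requires $\varphi$ to be an automorphism, and this can genuinely fail when $\varphi(H\cap K)\subsetneq H\cap K$. For example, glue two copies of the bilateral shift on $\Z_2^{(\Z)}$ along the subgroup of sequences supported on $\N$.
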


\begin{proof}
 Consider the endomorphism $\varphi|_H \oplus \varphi|_K$ of the direct sum $H \oplus K$. Since both components are positively expansive (resp. expansive), it follows from Proposition \ref{prop:directsum} that $\varphi|_H\oplus\varphi|_K$ is positively expansive (resp. expansive).
 
 Now, consider the sum map $\pi\colon H \oplus K \to G$ defined by $\pi(h,k)=h+k$. Since $G=H+K$, $\pi$ is an epimorphism. Furthermore, as $\pi\circ(\varphi|_H\oplus \varphi|_K)=\varphi\circ\pi$, we deduce that $\pi$ is a semiconjugacy from $\varphi|_H \oplus \varphi|_K$ to $\varphi$. Since $G$ is torsion, it follows from Proposition \ref{prop:conjugacy} that $\varphi$ is positively expansive (resp. expansive).
\end{proof}

\begin{prop}\label{prop:extension}
 Let $\varphi\colon G\to G$ be an endomorphism of a torsion group $G$ and $H\leq G$ a subgroup such that the restriction $\varphi|_H\colon H\to H$ and the induced endomorphism $\widetilde{\varphi}\colon G/H\to G/H$ are positively expansive, then $\varphi$ is positively expansive. An analogous statement holds for expansive automorphisms.
\end{prop}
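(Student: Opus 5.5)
Let $S_H\leq H$ be a positive generator for $\varphi|_H$ and $\widetilde S\leq G/H$ a positive generator for $\widetilde\varphi$. Since $G$ is torsion, so are $H$ and $G/H$, and by the reformulation proposition we have $\tsum_{k\in\N}\varphi^kS_H=H$ and $\tsum_{k\in\N}\widetilde\varphi^k\widetilde S=G/H$. The plan is to lift $\widetilde S$ to a finite subgroup of $G$ and add $S_H$ to it, obtaining a candidate generator $S$ for $\varphi$.

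The lift works as follows. Since $\widetilde S$ is finite, choose representatives $g_1,\dots,g_m\in G$ of its elements. Let $T=\langle g_1,\dots,g_m\rangle$; it is finitely generated and torsion, hence a finite subgroup of $G$, and it satisfies $T+H\supseteq$ the full preimage of $\widetilde S$. Set $S=S_H+T$, again a finite subgroup of $G$. We then show $\tsum_{k\in\N}\varphi^kS=G$. Let $\pi\colon G\to G/H$ be the quotient map and take $g\in G$. Because $\pi\varphi=\widetilde\varphi\pi$ and $\pi T\supseteq\widetilde S$, we get $\pi g\in\tsum_{k\le n}\widetilde\varphi^k\widetilde S\subseteq\pi\bigl(\tsum_{k\le n}\varphi^kT\bigr)$. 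Hence $g=t+h$ with $t\in\tsum_{k\le n}\varphi^kT$ and $h\in H$. Since $\varphi^kS_H$ with $k$ ranging over $\N$ already sums to $H$, we have $h\in\tsum_{k\in\N}\varphi^kS_H$. Therefore $g\in\tsum_{k\in\N}\varphi^k S$, and the reformulation proposition shows that $S$ is a positive generator.

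For automorphisms the same argument works with $\Z$ in place of $\N$. Here $\varphi|_H$ is an automorphism of $H$ and $\widetilde\varphi$ is an automorphism of $G/H$; this is part of the hypothesis, since they are assumed to be expansive automorphisms. We also use $\pi\varphi^k=\widetilde\varphi^k\pi$ for negative $k$, which holds because $\varphi^{-1}H=H$.

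The only delicate point is the finiteness of the lift $T$, which relies on $G$ being torsion. Once that is in place, everything is bookkeeping with images and sums of subgroups.
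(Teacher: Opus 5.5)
Your proof is correct and follows essentially the same route as the paper: you lift the quotient generator to a finite subgroup of $G$ (using that $G$ is torsion), add $S_H$, and absorb the error term in $H$ using the generator of $\varphi|_H$. The only difference is cosmetic. You argue elementwise through the reformulation $\sum_{k\in\N}\varphi^kS=t(G)=G$, whereas the paper works directly with a finite subgroup $F$, forms the finite subgroup $E=\langle f-x_f : f\in F\rangle\leq H$, and bounds $n$ explicitly.
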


\begin{proof}
 Let $\pi\colon G \to G/H$ be the canonical projection. Since $\varphi|_H$ and $\widetilde{\varphi}$ are positively expansive, there exist positive generators $S_H \leq H$ and $S_{G/H}\leq G/H$, respectively. Since $G$ is torsion, there exists a finite subgroup $S_0 \leq G$ such that $\pi S_0=S_{G/H}$. We shall prove that $S=S_H + S_0$ is a positive generator for $\varphi$.
 
 Let $F \leq G$ be any finite subgroup. Since its image $\pi F$ is a finite subgroup of $G/H$ and $S_{G/H}$ is a positive generator for $\widetilde\varphi$, there exists $n_1\in\N$ such that
 \[
  \pi F
  \subseteq\tsum_{k=0}^{n_1}\widetilde{\varphi}^kS_{G/H}
  =\tsum_{k=0}^{n_1}\widetilde{\varphi}^k\pi S_0 
  =\pi\tsum_{k=0}^{n_1}\varphi^k S_0.
 \]
 This inclusion implies that for every $f\in F$, one can pick $x_f\in\sum_{k=0}^{n_1}\varphi^k S_0$ such that $f-x_f\in\ker\pi=H$. Let $E\leq H$ be the (finite) subgroup generated by all such differences, that is, $E=\langle f-x_f:f\in F\rangle$. Since $S_H$ is a positive generator for $\varphi|_H$, there exists $n_2\in\N$ such that $E\subseteq\sum_{k=0}^{n_2} \varphi^k S_H$.
 
 Finally, letting $n = \max\{n_1, n_2\}$, we obtain
 \[
  F 
  \subseteq\tsum_{k=0}^n\varphi^kS_0+E
  \subseteq\tsum_{k=0}^n\varphi^kS_0+\tsum_{k=0}^n\varphi^kS_H
  =\tsum_{k=0}^n\varphi^kS.
 \]
 Thus, $S$ is a positive generator for $\varphi$ as desired.
\end{proof}

\begin{prop}
 Let $\varphi\colon G\to G$ be an endomorphism and $n\in\N$, $n\neq 0$. Then $\varphi$ is positively expansive if and only if $\varphi^n$ is positively expansive. An analogous statement holds for expansive automorphisms.
\end{prop}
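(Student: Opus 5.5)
The plan is to prove both implications by directly transforming generators. The key identity is the regrouping
\[
 \tsum_{k=0}^{nm+n-1}\varphi^kS=\tsum_{j=0}^{m}(\varphi^n)^j\Bigl(\tsum_{i=0}^{n-1}\varphi^iS\Bigr),
\]
valid for every subgroup $S\leq G$ and every $m\in\N$. It holds because $\varphi^{nj}$ is a homomorphism, so it commutes with sums of subgroups, and every $k$ with $0\le k\le nm+n-1$ can be written uniquely as $k=nj+i$ with $0\le j\le m$ and $0\le i\le n-1$.

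\emph{($\Rightarrow$)} Suppose $\varphi$ is positively $S$-expansive. Put $T=\sum_{i=0}^{n-1}\varphi^iS$; it is a finite subgroup because it is a finite sum of homomorphic images of the finite group $S$. Given a finite subgroup $F\leq G$, choose $N$ with $F\subseteq\sum_{k=0}^N\varphi^kS$, and choose $m$ with $nm+n-1\ge N$. Then $F\subseteq\sum_{k=0}^{nm+n-1}\varphi^kS$, which by the identity equals $\sum_{j=0}^m(\varphi^n)^jT$. Hence $T$ is a positive generator for $\varphi^n$.

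\emph{($\Leftarrow$)} Suppose $\varphi^n$ is positively $S$-expansive. The same $S$ works for $\varphi$, since $\sum_{j=0}^m\varphi^{nj}S\subseteq\sum_{k=0}^{nm}\varphi^kS$.

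For automorphisms the argument is the same, using symmetric windows. If $S$ generates $\varphi$, take $T=\sum_{i=0}^{n-1}\varphi^iS$. Every $k$ with $|k|\le N$ can be written as $k=nj+i$ with $0\le i\le n-1$ and $|j|\le m$ once $m$ is large enough (for instance $m\ge N$). Therefore $\sum_{|k|\le N}\varphi^kS\subseteq\sum_{|j|\le m}\varphi^{nj}T$. The converse again uses the same $S$, via $\sum_{|j|\le m}\varphi^{nj}S\subseteq\sum_{|k|\le nm}\varphi^kS$.

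There is no real obstacle here. The only points needing care are checking that $T$ is finite and that the index ranges line up, which takes a short bookkeeping argument using Euclidean division in the bilateral case.
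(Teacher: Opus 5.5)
Your proof is correct, and it uses the same device as the paper: enlarge the generator by a finite window of $\varphi$-iterates. The difference is in which implication gets the enlargement.

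The paper's sketch treats ``$\varphi$ expansive $\Rightarrow$ $\varphi^n$ expansive'' as immediate. It attaches the enlarged subgroup $\sum_{|k|\le n}\varphi^kS$ to the converse, where it is harmless but unnecessary. As you note, a generator $S$ for $\varphi^n$ is already a generator for $\varphi$, because $\sum_{|j|\le m}\varphi^{nj}S\subseteq\sum_{|k|\le nm}\varphi^kS$.

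The implication that actually needs a new generator is the forward one. For the bilateral shift $\sigma$ with generator $S_0$ (and $S\neq0$), $S_0$ is not a generator for $\sigma^2$: every $\sum_{|j|\le m}\sigma^{2j}S_0$ consists of sequences supported on even coordinates, so it never contains $\sigma S_0$. Your $T=\sum_{i=0}^{n-1}\varphi^iS$, together with the Euclidean-division bookkeeping, supplies exactly the argument the paper leaves out.

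The paper's $\sum_{|k|\le n}\varphi^kS$ would also serve as a generator for $\varphi^n$, since it contains your $T$ and any finite subgroup containing a generator is itself a generator. So your version is the more complete write-up of the same idea.
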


\begin{proof}
 We consider the case of automorphisms; the case of endomorphisms can be shown analogously. If $\varphi$ is expansive, then $\varphi^n$ is also expansive. Conversely, if $\varphi^n$ is expansive with generator $S$, then it is easy to verify that $\tsum_{|k|\leq n} \varphi^{k}(S)$ is a generator for $\varphi$.
\end{proof}

\section{Entropy and expansivity}

Next we recall the definition of algebraic entropy for group endomorphisms.

\begin{df}
 Let $\varphi\colon G\to G$ be a group endomorphism and $F\leq G$ a finite subgroup of $G$. We define the \emph{entropy of $\varphi$ relative to $F$} and the \emph{entropy of $\varphi$} as
 \[
  h_\textup{alg}(\varphi,F)=\textstyle\limsup_n\frac1n\log\bigl|\tsum_{k=0}^{n-1}\varphi^kF\bigr|
  \quad\text{and}\quad
  h_\textup{alg}(\varphi)=\sup_Fh_\textup{alg}(\varphi,F),
 \]
 respectively, where the $\sup_F$ is taken over all finite subgroups of $G$.
\end{df}

By \cite{Wei74}*{p.\,243} the sequence $n\mapsto\log\bigl|\tsum_{k=0}^{n-1}\varphi^kF\bigr|$ is subadditive, so that the $\limsup_n$ in the above definition can be replaced by a $\lim_n$, which is always finite.

\begin{lem}\label{lem:entropy}
 Let $\varphi\colon G\to G$ be an endomorphism and $E,F\leq G$ finite subgroups.
 \begin{enumerate}
  \item If $E\subseteq F$, then $h_\textup{alg}(\varphi,E)\leq h_\textup{alg}(\varphi,F)$.
  \item For any $m\in\N$, $h_\textup{alg}(\varphi,F)=h_\textup{alg}(\varphi,\sum_{k=0}^m\varphi^kF)$.
 \end{enumerate}
 If, in addition, $\varphi$ is an automorphism then
  \begin{enumerate}\setcounter{enumi}{2}
  \item $h_\textup{alg}(\varphi,F)=h_\textup{alg}(\varphi,\sum_{|k|\leq m}\varphi^kF)$ for any $m\in\N$.
 \end{enumerate}
\end{lem}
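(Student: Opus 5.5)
Part (1) follows directly from monotonicity of the trajectories. If $E\subseteq F$, then $\varphi^kE\subseteq\varphi^kF$ for every $k$. Hence
\[
 \tsum_{k=0}^{n-1}\varphi^kE\subseteq\tsum_{k=0}^{n-1}\varphi^kF
\]
for every $n$. Taking cardinalities, logarithms, dividing by $n$ and passing to the limit gives $h_\textup{alg}(\varphi,E)\leq h_\textup{alg}(\varphi,F)$.

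For (2), set $F_m=\sum_{k=0}^m\varphi^kF$, which is a finite subgroup. Since $F\subseteq F_m$, part (1) gives $h_\textup{alg}(\varphi,F)\leq h_\textup{alg}(\varphi,F_m)$. For the reverse inequality I will compute the trajectory of $F_m$ explicitly. Because $\varphi^k$ is a homomorphism, it commutes with finite sums of subgroups, so
\[
 \tsum_{k=0}^{n-1}\varphi^kF_m=\tsum_{k=0}^{n-1}\tsum_{j=0}^{m}\varphi^{k+j}F=\tsum_{i=0}^{n+m-1}\varphi^iF .
\]
Therefore
\[
 \frac1n\log\bigl|\tsum_{k=0}^{n-1}\varphi^kF_m\bigr|=\frac{n+m}{n}\cdot\frac1{n+m}\log\bigl|\tsum_{i=0}^{n+m-1}\varphi^iF\bigr| .
\]
Since the limits exist by the subadditivity remark after the definition, and $(n+m)/n\to1$, both sides converge to the same value. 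This gives $h_\textup{alg}(\varphi,F_m)=h_\textup{alg}(\varphi,F)$. The argument avoids (1) entirely, but using (1) for one inequality is harmless.

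For (3), $\varphi$ is an automorphism and we put $F'=\sum_{|k|\leq m}\varphi^kF=\varphi^{-m}F_{2m}$. By the same reindexing,
\[
 \tsum_{k=0}^{n-1}\varphi^kF'=\varphi^{-m}\tsum_{i=0}^{n+2m-1}\varphi^iF .
\]
Since $\varphi^{-m}$ is injective, this set has the same cardinality as $\sum_{i=0}^{n+2m-1}\varphi^iF$. The same limit computation as in (2), now with the factor $(n+2m)/n\to1$, yields $h_\textup{alg}(\varphi,F')=h_\textup{alg}(\varphi,F)$.

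The only point requiring care is the reindexing identity together with the cardinality preservation under $\varphi^{-m}$. This is where the automorphism hypothesis is used: for a general endomorphism, preimages could enlarge the sets. Everything else is routine.
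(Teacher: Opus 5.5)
Your proposal is correct and follows essentially the same route as the paper. Both arguments use the reindexing identity $\sum_{k=0}^{n-1}\varphi^kF_m=\sum_{i=0}^{n+m-1}\varphi^iF$ with the factor $(n+m)/n\to1$, and for (3) both write $\sum_{|k|\leq m}\varphi^kF=\varphi^{-m}\sum_{k=0}^{2m}\varphi^kF$, where the bijectivity of $\varphi^{-m}$ preserves cardinalities.
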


\begin{proof}
 Property (1) is easily established. For (2), let $E=\sum_{l=0}^m\varphi^lF$, and note that for $n\in\N$ we have $\tsum_{k=0}^{n-1}\varphi^kE=\tsum_{k=0}^{m+n-1}\varphi^kF$. Then
 \[
  h_\textup{alg}(\varphi,E)
  =\lim_n\tfrac1n\log\Bigl|\dsum_{k=0}^{n-1}\varphi^kE\Bigr|
  =\lim_n\tfrac{m+n}n\tfrac1{m+n}\log\Bigl|\dsum_{k=0}^{m+n-1}\varphi^kF\Bigr|
  =h_\textup{alg}(\varphi,F).
 \]
 Finally, (3) is obtained similarly by noting that $\sum_{|k|\leq m}\varphi^kF=\varphi^{-m}\sum_{k=0}^{2m}\varphi^kF$.
\end{proof}

\begin{thm}
 Let $\varphi\colon G \to G$ be a group endomorphism (resp. automorphism). If $S\leq G$ is a positive generator (resp. generator) for $\varphi$, then $h_\textup{alg}(\varphi)=h_\textup{alg}(\varphi, S)$.
 
 In particular, any positively expansive endomorphism or expansive automorphism has finite algebraic entropy.
\end{thm}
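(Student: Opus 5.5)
The plan is to prove the two inequalities $h_\textup{alg}(\varphi,S)\leq h_\textup{alg}(\varphi)$ and $h_\textup{alg}(\varphi)\leq h_\textup{alg}(\varphi,S)$ separately. The first holds by definition, since $h_\textup{alg}(\varphi)$ is the supremum of $h_\textup{alg}(\varphi,F)$ over all finite subgroups $F$, and $S$ is one of them. So the whole task is the reverse inequality: I will show that $h_\textup{alg}(\varphi,F)\leq h_\textup{alg}(\varphi,S)$ for every finite subgroup $F\leq G$.

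Fix a finite subgroup $F\leq G$. In the endomorphism case, $S$ is a positive generator, so by Definition~\ref{df:expansive} there is $m\in\N$ with $F\subseteq\sum_{k=0}^m\varphi^kS$. Then part (1) of Lemma~\ref{lem:entropy} (monotonicity in the subgroup) gives $h_\textup{alg}(\varphi,F)\leq h_\textup{alg}(\varphi,\sum_{k=0}^m\varphi^kS)$. Part (2) of the same lemma identifies the right-hand side with $h_\textup{alg}(\varphi,S)$.

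In the automorphism case the argument is identical. We choose $m$ with $F\subseteq\sum_{|k|\leq m}\varphi^kS$, then apply part (1) followed by part (3) of Lemma~\ref{lem:entropy}. Taking the supremum over $F$ then yields $h_\textup{alg}(\varphi)\leq h_\textup{alg}(\varphi,S)$, and hence equality.

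For the ``in particular'' clause, recall the remark after the definition of entropy: by Weiss's subadditivity, $h_\textup{alg}(\varphi,S)$ is a genuine limit and is always finite. More concretely, $\bigl|\sum_{k=0}^{n-1}\varphi^kS\bigr|\leq|S|^n$, so $h_\textup{alg}(\varphi,S)\leq\log|S|<\infty$. Combined with the equality above, any positively expansive endomorphism or expansive automorphism has finite algebraic entropy.

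No step is genuinely hard here. The only point requiring care is that Lemma~\ref{lem:entropy}(3) needs $\varphi$ to be an automorphism, so the two cases must be kept separate and the bilateral sum used only when $\varphi$ is invertible.
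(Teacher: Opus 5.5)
Your proposal is correct and follows essentially the same argument as the paper: cover each finite $F$ by $\sum_{k=0}^m\varphi^kS$ (resp.\ $\sum_{|k|\leq m}\varphi^kS$), then apply parts (1) and (2) (resp.\ (3)) of Lemma~\ref{lem:entropy} and take the supremum. Your explicit bound $h_\textup{alg}(\varphi,S)\leq\log|S|$ is a slightly more concrete justification of finiteness than the paper's appeal to the remark after the definition of entropy.
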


\begin{proof}
 If $S$ is a positive generator for $\varphi$, then for every finite subgroup $F\leq G$ there exists $m\in\N$ such that $F\subseteq\sum_{k=0}^m\varphi^kS$. Therefore, by Lemma \ref{lem:entropy}, 
 \[
  h_\textup{alg}(\varphi,F)\leq h_\textup{alg}(\varphi,\tsum_{k=0}^m\varphi^kS)=h_\textup{alg}(\varphi,S).  
 \]
Whence, $h_\textup{alg}(\varphi)=\sup_Fh_\textup{alg}(\varphi,F)=h_\textup{alg}(\varphi,S)$, which is finite. The case for expansive automorphisms follows similarly using part (3) of Lemma \ref{lem:entropy}.
\end{proof}

\section{Positively expansive epimorphisms}

\begin{lem}\label{lem:autogarche}
 If $\varphi$ is a positively expansive epimorphism of a torsion group $G$, then there exists a finite subgroup $K\leq G$ with the following property: for every finite subgroup $F\leq G$ there exists $m\in\N$ such that $F\subseteq\varphi^mK$.
\end{lem}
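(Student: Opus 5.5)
Let $S$ be a positive generator for $\varphi$. The plan is to take $K=\sum_{k=0}^{n_0}\varphi^kS$ for a suitable $n_0$, and to show that every finite $F$ lies in $\varphi^mK$ for some $m$. The key idea is to exploit surjectivity to pull $S$ back along $\varphi$: since $\varphi$ is onto and $G$ is torsion, for each $j$ there is a finite subgroup $T_j\le G$ with $\varphi^jT_j=S$. We lift generators of $S$ by $\varphi^j$ and take the subgroup they generate, which is finite because $G$ is torsion and the subgroup is finitely generated.

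First we fix $n_0$. Apply positive expansivity to the finite subgroup $T_1$ (with $\varphi T_1=S$): there is $n_0$ with $T_1\subseteq\sum_{k=0}^{n_0}\varphi^kS=K$. Applying $\varphi$ gives
\[
  S=\varphi T_1\subseteq \varphi K.
\]
Next we iterate. Since $K=\sum_{k=0}^{n_0}\varphi^kS$ and $S\subseteq\varphi K$, we get
\[
  \varphi K=\tsum_{k=1}^{n_0+1}\varphi^kS\supseteq S,
\]
so $\varphi K\supseteq S+\sum_{k=1}^{n_0+1}\varphi^kS\supseteq K$. Hence $K\subseteq\varphi K$, and by induction
\[
  K\subseteq\varphi K\subseteq\varphi^2K\subseteq\cdots.
\]
Moreover $\varphi^mK=\sum_{k=m}^{m+n_0}\varphi^kS$. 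Using the chain, this gives $\varphi^mK\supseteq\varphi^{j}K\supseteq\varphi^kS$ for all $k\le m+n_0$ with $k\geq j$, and in particular $\varphi^mK\supseteq\sum_{k=0}^{m+n_0}\varphi^kS$. To check this, note that for $k\le m$ we have $\varphi^kS\subseteq\varphi^kK\subseteq\varphi^mK$, and for $m\le k\le m+n_0$ it holds by definition.

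Finally, given a finite $F$, positive expansivity yields $n$ with $F\subseteq\sum_{k=0}^n\varphi^kS\subseteq\varphi^nK$, so $m=n$ works.

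The main obstacle is the step $S\subseteq\varphi K$, i.e.\ obtaining a finite preimage of $S$ and absorbing it into $K$. This is exactly where both surjectivity and the torsion hypothesis are used, the latter to keep the lifted subgroup $T_1$ finite. Once $K\subseteq\varphi K$ is established, the monotone chain makes the rest routine.
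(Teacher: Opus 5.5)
Your proof is correct and follows essentially the same route as the paper: the same $K=\sum_{k=0}^{n_0}\varphi^kS$, chosen via a finite lift of $S$ absorbed into $K$. Your monotone chain $K\subseteq\varphi K\subseteq\varphi^2K\subseteq\cdots$ plays the role of the paper's inductive identity $\sum_{k=0}^{n+m}\varphi^kS=\sum_{k=m}^{n+m}\varphi^kS$. The sentence about $\varphi^mK\supseteq\varphi^jK\supseteq\varphi^kS$ is garbled, but the direct check that follows it is sound.
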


\begin{proof}
 Let $S\leq G$ be a positive generator for $\varphi$. Since $S$ is finite, $\varphi$ is surjective, and $G$ is torsion, there exists a finite subgroup $S_0\leq G$ such that $\varphi S_0=S$. By positive expansivity, there exists $n\in\N$ such that $S_0\subseteq\sum_{k=0}^n\varphi^kS$.  
 
 We will give an inductive proof of the following claim:
 \begin{equation}\label{eq:autogarche}
   \tsum_{k=0}^{n+m}\varphi^kS=\tsum_{k=m}^{n+m}\varphi^kS \quad\text{for all}\quad m\in\N.
 \end{equation}
 Indeed, if $m=0$ the equality is trivial. If the equality is true for an $m\in\N$, then
 \[
  \tsum_{k=m+1}^{n+m+1}\varphi^kS
  =\varphi\tsum_{k=m}^{n+m}\varphi^kS
  =\varphi\tsum_{k=0}^{n+m}\varphi^kS
  =\tsum_{k=1}^{n+m+1}\varphi^kS.
 \]
 Now, since $S_0\subseteq\sum_{k=0}^n\varphi^kS$, we also have
 \[
  S=\varphi S_0
  \subseteq\varphi\tsum_{k=0}^{n}\varphi^kS
  =\tsum_{k=1}^{n+1}\varphi^kS
  \subseteq\tsum_{k=1}^{n+m+1}\varphi^kS,
 \]
 from which we deduce that $\tsum_{k=0}^{n+m+1}\varphi^kS=\tsum_{k=1}^{n+m+1}\varphi^kS$, which proves the claim.
 
 Finally, taking $K=\sum_{k=0}^n\varphi^kS$, claim (\ref{eq:autogarche}) translates into $\varphi^mK=\tsum_{k=0}^{n+m}\varphi^kS$ for all $m\in\N$, and the result follows from the fact that $S$ is a positive generator.
\end{proof}

\begin{thm}\label{thm:positexpepiis finite}
 If a torsion group $G$ supports a positively expansive epimorphism then $G$ is finite.
\end{thm}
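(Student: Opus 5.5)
The plan is to apply Lemma~\ref{lem:autogarche} and then argue by cardinality. Let $\varphi$ be a positively expansive epimorphism of the torsion group $G$, and let $K\leq G$ be the finite subgroup provided by Lemma~\ref{lem:autogarche}. Then every finite subgroup $F\leq G$ satisfies $F\subseteq\varphi^mK$ for some $m\in\N$. Since $\varphi^m$ is a homomorphism, $|\varphi^mK|\leq|K|$, and therefore every finite subgroup of $G$ has order at most $|K|$.

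The second step is to pass from this bound to finiteness of $G$. Because $G$ is torsion, it is the union of its finite subgroups, and any finite subset of $G$ generates a finite subgroup. Suppose $G$ were infinite. We could then choose $|K|+1$ distinct elements of $G$. The subgroup they generate is finite and has more than $|K|$ elements, which contradicts the bound from the first step. Hence $|G|\leq|K|$, and $G$ is finite.

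A slightly sharper way to phrase the same argument is to take a finite subgroup $F_0$ of maximal order, which exists since orders are bounded by $|K|$. For any $g\in G$, the subgroup $F_0+\langle g\rangle$ is finite and contains $F_0$, so by maximality it equals $F_0$. This gives $G=F_0$.

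I do not expect a real obstacle here, because the substantive work is already done in Lemma~\ref{lem:autogarche}. The only point needing care is to use the torsion hypothesis correctly, namely that finitely generated subgroups of a torsion abelian group are finite. This fact is what allows a uniform bound on finite subgroups to become a bound on $|G|$ itself.
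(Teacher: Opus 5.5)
Your proposal is correct and follows the paper's proof. You apply Lemma~\ref{lem:autogarche} to bound the order of every finite subgroup by $|K|$, then use the torsion hypothesis (finitely generated subgroups are finite) to conclude that $G$ is finite; this last step, which the paper leaves implicit, you spell out, and since $K\leq G$ your bound $|G|\leq|K|$ gives the paper's $|G|=|K|$.
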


\begin{proof}
 By Lemma \ref{lem:autogarche}, there exists a finite subgroup $K \leq G$ such that every finite subgroup $F \leq G$ is contained in $\varphi^m K$ for some $m \in \mathbb{N}$. Then $|F|$ is uniformly bounded by $|K|$. Thus $G$ is finite and $|G| = |K|$.
\end{proof}

\begin{ex}
 We will show an example of a positively expansive epimorphism supported on an infinite group.
 Define $G=\toplus_{n\in\Z}G_n$ where $G_n=\Z$ if $n<0$ and $G_n=\Z_2$ if $n\geq0$. Let $\varphi\colon G\to G$ be the shift-like map given by 
 \[
 (\ldots,g_{-2},g_{-1},\stackrel{\vee}{g_0},g_1,g_2,\ldots)
 \stackrel{\varphi}{\longmapsto}
 (\ldots,g_{-3},g_{-2},\stackrel{\vee}{\pi g_{-1}},g_0,g_1,\ldots),
 \]
 where the ``$\stackrel{\vee}{}$'' marks the 0-th coordinate of each bi-sequence and $\pi$ is the canonical epimorphism $\Z\to\Z_2$. It is easy to check that the subgroup consisting of the bi-sequences supported on the 0-th coordinate is a positive expansivity subgroup for $\varphi$. Of course, in view of Theorem \ref{thm:positexpepiis finite}, this is possible since $G$ is not torsion. Note also that the positively expansive endomorphism of the torsion part of $G$, namely $t(G)=\toplus_{n\in\N}G_n$, induced by $\varphi$ is not surjective in this case.
\end{ex}

\section{Connection to topological expansivity in the dual group}

For continuous endomorphisms $\psi$ of a compact group $K$, topological expansivity is expressed in terms of neighborhoods of the identity $e \in K$ \cites{Eis66,Wu_66} as follows: $\psi$ is said to be positively expansive if there exists a neighborhood $U$ of $e$ such that $\bigcap_{k \in \mathbb{N}} \psi^{-k}(U) = \{e\}$. Analogously, if $\psi$ is an automorphism, then $\psi$ is expansive if there exists a neighborhood $U$ of $e$ such that $\bigcap_{k \in \mathbb{Z}} \psi^{-k}(U) = \{e\}$.

Let $G$ be a group and $\widehat{G}$ its \emph{Pontryagin dual}. Given a subgroup $H \leq G$ consider its \emph{annihilator} \quad $H^\perp = \{ \chi \in \widehat{G} : \chi|_H=0\}\leq\widehat{G}$,\quad and for an endomorphism $\varphi\colon G\to G$ consider its \emph{dual endomorphism} \quad $\widehat\varphi\colon\widehat{G}\to\widehat{G}$,\quad $\widehat\varphi(\chi)=\chi\circ\varphi$, for $\chi\in\widehat G$. Then it is known that \quad $(\varphi H)^\perp = \widehat{\varphi}^{-1}H^\perp$,\quad where for $K \leq \widehat{G}$, $K^\perp = \bigcap_{\chi \in K} \ker \chi$ is its \emph{annihilator in} $G$. Moreover, for a family of subgroups $\{H_i\}$ of $G$, resp. $\{K_i\}$ of $\widehat G$, we have the formulas \quad $(\sum_i H_i)^\perp = \bigcap_i H_i^\perp$,\quad resp. \quad $(\bigcap_i K_i)^\perp = \overline{\sum_i K_i^\perp}$.

\begin{thm}
 The algebraic expansivity on (discrete) torsion groups is the Pontryagin dual concept of topological expansivity on $0$-dimensional compact groups.
\end{thm}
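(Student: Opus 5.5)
We first make the statement precise. Let $G$ be a discrete torsion group, so $K=\widehat G$ is a compact $0$-dimensional group, and every such group arises this way. Let $\varphi\colon G\to G$ be an endomorphism, with dual $\psi=\widehat\varphi$. The claim is that $\varphi$ is positively expansive if and only if $\psi$ is positively expansive in the topological sense recalled above. Likewise, for an automorphism $\varphi$ (equivalently, for the topological automorphism $\psi$), expansivity of $\varphi$ is equivalent to expansivity of $\psi$. The plan is to transport generators across the annihilator correspondence $S\mapsto S^\perp$.

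\textbf{Step 1: neighborhoods of $e$ versus finite subgroups.} Finite subgroups $S\leq G$ correspond to open subgroups $S^\perp\leq\widehat G$. Indeed, $S^\perp$ is the kernel of the restriction map $\widehat G\to\widehat S$, whose target is finite, so $S^\perp$ is open. Since $\widehat G$ is compact and $0$-dimensional, the open subgroups form a neighborhood base of $e$. So every neighborhood $U$ of $e$ contains an open subgroup $V$. By duality $V=(V^\perp)^\perp$, and $V^\perp\leq G$ is finite because $\widehat{V^\perp}\cong\widehat G/V$ is discrete and compact, hence finite. Shrinking $U$ preserves the condition $\bigcap_k\psi^{-k}U=\{e\}$. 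Therefore $\psi$ is positively expansive if and only if there is a finite $S\leq G$ with $\bigcap_{k\in\N}\psi^{-k}S^\perp=\{e\}$.

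\textbf{Step 2: dualize the generator condition.} Using $(\varphi^kS)^\perp=\widehat\varphi^{-k}S^\perp$ and $(\sum_iH_i)^\perp=\bigcap_iH_i^\perp$, we get
\[
\Bigl(\tsum_{k\in\N}\varphi^kS\Bigr)^\perp=\tcap_{k\in\N}\psi^{-k}S^\perp .
\]
By part (1) of the first proposition of Section 2, $S$ is a positive generator if and only if $\sum_{k\in\N}\varphi^kS=t(G)=G$. If $\sum_k\varphi^kS=G$, the right-hand side above is $G^\perp=\{e\}$. Conversely, if the intersection is $\{e\}$, then $H=\sum_k\varphi^kS$ satisfies $H^\perp=\{e\}$, and $H=H^{\perp\perp}=\{e\}^\perp=G$. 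The automorphism case is identical, with $k\in\Z$ and part (3) of the same proposition.

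\textbf{Main obstacle.} The only delicate point is the double-annihilator identity $H=(H^\perp)^\perp$ for an arbitrary subgroup $H$ of the discrete group $G$. This is needed because $\sum_k\varphi^kS$ need not be finite. I would quote it from Pontryagin duality: for discrete $G$, the characters separate points of $G/H$. One could also use the stated formula $(\bigcap_iK_i)^\perp=\overline{\sum_iK_i^\perp}$ with $K_i=\psi^{-k}S^\perp$, where closures are harmless because $G$ is discrete.

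A second point to check is that the correspondence in the theorem covers every object on each side. Every $0$-dimensional compact abelian group is the dual of a torsion discrete group. Every continuous endomorphism of it is $\widehat\varphi$ for a unique $\varphi$, and $\psi$ is an automorphism exactly when $\varphi$ is.
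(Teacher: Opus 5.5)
Your proposal is correct and follows essentially the paper's proof: both use the annihilator correspondence between finite subgroups of $G$ and open subgroups of $\widehat G$, and both dualize $\sum_k\varphi^kS=G$ into $\bigcap_k\widehat\varphi^{-k}S^\perp=\{e\}$ via $(\varphi H)^\perp=\widehat\varphi^{-1}H^\perp$ and $(\sum_iH_i)^\perp=\bigcap_iH_i^\perp$. The only cosmetic difference is in the converse: you apply $H^{\perp\perp}=H$ directly to $H=\sum_k\varphi^kS$, whereas the paper invokes $(\bigcap_iK_i)^\perp=\overline{\sum_iK_i^\perp}$, which silently relies on the same double-annihilator identity to get $(\widehat\varphi^{-n}U)^\perp=\varphi^nU^\perp$.
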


\begin{proof}
 We will consider only the case of positive expansivity for endomorphisms, as the case of expansivity for automorphisms is analogous. By the duality theory, we know that torsion groups correspond to totally disconnected compact groups. Moreover, for a torsion group $G$, the family of annihilators $H^\perp$ of finite subgroups $H \leq G$ forms a basis of clopen neighborhoods of the identity $e \in \widehat{G}$.

 Let $\varphi\colon G\to G$ be an endomorphism of a torsion group $G$. 

 Suppose first that $\varphi$ is (algebraically) positively expansive. Then there exists a finite subgroup $S\leq G$ such that $G=\sum_{n\in\N} \varphi^n(S)$. Since
 \[
  \{e\}
  =G^\perp
  =(\tsum_{n\in\N}\varphi^nS)^\perp
  =\tcap_{n\in\N}(\varphi^nS)^\perp
  =\tcap_{n\in\N}\widehat\varphi^{-n}S^\perp,
 \]
 we conclude that $U=S^\perp$ is a neighborhood of the identity in $\widehat{G}$ satisfying the definition of positive topological expansivity for $\widehat\varphi$.

 Conversely, suppose that $\widehat{\varphi}$ is positively topologically expansive. Then there exists an open neighborhood $U$ of $e\in\widehat{G}$ such that $\bigcap_{n\in\N}\widehat{\varphi}^{-n}U = \{e\}$. Since $\widehat G$ is zero-dimensional, we may assume without loss of generality that $V$ is a clopen subgroup. Let $S=U^\perp$ be its \emph{annihilator in $G$}. Since $V$ is an open subgroup of a compact group, $S$ is a finite subgroup of $G$. By applying the dual formula for the intersection, we have
 \[
  G
  =\{e\}^\perp
  =\bigl(\tcap_{n\in\N}\widehat\varphi^{-n}U\bigr)^\perp
  =\overline{\tsum_{n\in\N}(\widehat{\varphi}^{-n}U)^\perp}
  =\tsum_{n\in\N}\varphi^nU^\perp,
 \] 
 from which we obtain that $S=U^\perp$ is a positive generator for $\varphi$
\end{proof}

The duality established in the previous result leads to a noteworthy consequence regarding the stability of expansivity. In \cite{Wil11}*{Proposition 6.1}, it is proven that the quotient of a topologically expansive automorphism on a 0-dimensional compact group remains expansive. By the Theorem above, this result translates into the fact that the restriction of an algebraically expansive automorphism to an invariant subgroup is also algebraically expansive. While the original proof in \cite{Wil11} is far from trivial, the duality established here suggests the possibility of a more direct, purely algebraic approach. Developing such a proof remains an interesting problem to further clarify the connection between these two notions of expansivity.

\begin{thm}
 Let $\varphi \colon G \to G$ be an expansive automorphism of a torsion group $G$. If $H \leq G$ is a subgroup such that $\varphi H=H$, then the restriction $\varphi|_H \colon H \to H$ is also algebraically expansive.
\end{thm}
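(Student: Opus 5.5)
The plan is to go through Pontryagin duality, as the preceding discussion suggests. We combine the previous Theorem (algebraic expansivity on torsion groups is dual to topological expansivity on $0$-dimensional compact groups) with \cite{Wil11}*{Proposition 6.1}, which says that quotients of expansive automorphisms of totally disconnected compact groups are expansive. The point is that restricting to an invariant subgroup is dual to passing to a quotient.

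First, $\varphi|_H$ is an automorphism of $H$. It is injective because $\varphi$ is, and it is surjective because $\varphi H=H$. Also, $H$ is torsion since $G$ is. Since $G$ is torsion and $\varphi$ is algebraically expansive, the previous Theorem shows that $\widehat\varphi$ is a topologically expansive automorphism of the totally disconnected compact group $\widehat G$.

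Next, consider the closed subgroup $H^\perp\leq\widehat G$. From the formula $(\varphi H)^\perp=\widehat\varphi^{-1}H^\perp$ and $\varphi H=H$ we get $\widehat\varphi^{-1}H^\perp=H^\perp$. Hence $\widehat\varphi H^\perp=H^\perp$, so $\widehat\varphi$ induces a topological automorphism $\widetilde{\widehat\varphi}$ of $\widehat G/H^\perp$. By \cite{Wil11}*{Proposition 6.1}, $\widetilde{\widehat\varphi}$ is expansive.

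We then identify this quotient system with the dual of the restriction. By standard duality, restriction of characters $\rho\colon\widehat G\to\widehat H$, $\chi\mapsto\chi|_H$, is a continuous epimorphism with kernel $H^\perp$. It therefore induces a topological isomorphism $\widehat G/H^\perp\cong\widehat H$. Moreover,
\[
\rho(\widehat\varphi\chi)=(\chi\circ\varphi)|_H=\chi|_H\circ\varphi|_H=\widehat{\varphi|_H}(\rho\chi),
\]
so this isomorphism conjugates $\widetilde{\widehat\varphi}$ to $\widehat{\varphi|_H}$. Topological expansivity (in the neighborhood formulation of \cites{Eis66,Wu_66}) is clearly invariant under topological conjugacy, so $\widehat{\varphi|_H}$ is expansive on the $0$-dimensional compact group $\widehat H$. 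Applying the converse direction of the previous Theorem to the torsion group $H$ then gives that $\varphi|_H$ is algebraically expansive.

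The main obstacle is not the formal chain but making sure each ingredient applies as stated. First, Willis's result must apply to compact totally disconnected groups with the same notion of expansivity used here ($\bigcap_{k\in\Z}\psi^{-k}U=\{e\}$ for some identity neighborhood $U$). Second, the identification $\widehat G/H^\perp\cong\widehat H$ must be a topological isomorphism. This holds because $\rho$ is a continuous surjection from a compact group onto a Hausdorff group, and surjectivity of $\rho$ comes from extending characters of $H$ to $G$, using divisibility of $\mathbb{T}$.
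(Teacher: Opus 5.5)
Your proposal is correct and takes essentially the same approach as the paper: the paper obtains this theorem by transporting Willis's result that quotients of expansive automorphisms of $0$-dimensional compact groups remain expansive through the duality theorem. You supply the details the paper leaves implicit, namely that $\varphi|_H$ is an automorphism, that $\widehat\varphi H^\perp=H^\perp$, and that the topological isomorphism $\widehat G/H^\perp\cong\widehat H$ conjugates the induced quotient map to $\widehat{\varphi|_H}$.
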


\begin{conj}
 Let $\varphi \colon G \to G$ be an expansive endomorphism of a torsion group $G$. If $H \leq G$ is a subgroup such that $\varphi H\subseteq H$, then the restriction $\varphi|_H \colon H \to H$ is also positively expansive.
\end{conj}

\vspace{10mm}

\noindent Mauricio Achigar Pereira\\
{\tt machigar@litoralnorte.udelar.edu.uy}\\
{\sc Departamento de Matemática y Estadística del Litoral}\\
{\sc Centro Universitario Regional Litoral Norte}\\
{\sc Universidad de la República}\\
25 de Agosto 281, Salto (50000), Uruguay


\begin{bibdiv}
\begin{biblist}


\bib{Ach19}{thesis}{
author={M. Achigar},
title={\href{https://hdl.handle.net/20.500.12008/23282}{Din{\'a}mica topol{\'o}gica expansiva: algunos aportes}},
type={PhD Thesis},
organization={Universidad de la Rep{\'u}blica (Uruguay)},
address={Facultad de Ciencias - PEDECIBA},
year={2019}}


\bib{Ach21}{article}{
author={M. Achigar},
title={\href{http://doi.org/10.1016/j.topol.2020.107577}{Expansive systems on lattices}},
journal={Topol. Appl.},
volume={290},
number=={},
year={2021},
pages={107577}}


\bib{Ach23}{article}{
author={M. Achigar},
title={\href{https://arxiv.org/abs/2203.10394}{Abstract entropy and expansiveness}},
journal={arXiv:2203.10394v2},
year={2023},
note={Preprint}}


\bib{AAM16}{article}{
author={M. Achigar},
author={A. Artigue},
author={I. Monteverde},
title={\href{https://doi.org/10.1016/j.topol.2016.04.016}{Expansive homeomorphisms on non-Hausdorff spaces}},
journal={Topol. Appl.},
volume={207},
year={2016},
pages={109--122}}


\bib{AKM65}{article}{
author={L. Adler},
author={G. Konheim},
author={M. McAndrew},
title={\href{https://doi.org/10.2307/1994177}{Topological entropy}},
journal={Trans. Amer. Math. Soc.},
volume={114},
year={1965},
pages={309--319}}




\bib{Bry60}{article}{
author={B. F. Bryant},
title={\href{https://doi.org/10.2140/PJM.1960.10.1163}{On expansive homeomorphisms}},
journal={Pac. J. Math.},
volume={10},
year={1960},
pages={1163--1167}}


\bib{Bry62}{article}{
author={B. F. Bryant},
title={\href{https://doi.org/10.2307/2312129}{Expansive self-homeomorphisms of a compact metric space}},
journal={Am. Math. Mon.},
volume={69},
number={5},
year={1962},
pages={386--391}}


\bib{CoK06}{article}{
author={E. M. Coven},
author={M. Keane},
title={\href{https://doi.org/10.1214/074921706000000310}{Every compact metric space that supports a positively expansive homeomorphism is finite}},
journal={Dynamics \& Stochastics, IMS Lecture Notes Monogr. Ser.},
volume={48},
pages={304--305},
year={2006}}


\bib{DGB13}{article}{
author={D. Dikranjan},
author={A. Giordano Bruno},
title={\href{https://doi.org/10.1285/i15900932v33n1p1}{Discrete dynamical systems in group theory}},
journal={Note Mat.},
volume={33},
number={1},
pages={1--48},
year={2013}}


\bib{DGB16}{article}{
author={D. Dikranjan},
author={A. Giordano Bruno},
title={\href{https://doi.org/10.1016/j.aim.2016.04.020}{Entropy on abelian groups}},
journal={Adv. Math.},
volume={298},
pages={612--653},
year={2016}}


\bib{DGB19}{article}{
author={D. Dikranjan},
author={A. Giordano Bruno},
title={\href{https://doi.org/10.4064/dm791-2-2019}{Entropy on normed semigroups (towards a unifying approach to entropy)}},
journal={Diss. Math.},
volume={542},
pages={1--90},
year={2019}}






\bib{Eis66}{article}{
title={\href{https://doi.org/10.4064/FM-59-3-313-321}{Expansive transformation semigroups of endomorphisms}},
author={Murray Eisenberg},
journal={Fund. Math.},
year={1966},
volume={59},
pages={313-321}}


\bib{Fri87}{article}{
author={D. Fried},
title={\href{https://doi.org/10.1017/S014338570000417X}{Finitely presented dynamical systems}},
journal={Ergod. Theory Dyn. Syst.},
year={1987},
volume={7},
number={4},
pages={489--507}}




\bib{GoH55}{book}{
author={W. H. Gottschalk},
author={G. A. Hedlund},
title={\href{https://doi.org/10.1090/coll/036}{Topological dynamics}},
volume={36},
series={Colloquium Publications},
publisher={Amer. Math. Soc.},
year={1955}}




\bib{KeR69}{article}{
author={H. Keynes},
author={J. Robertson},
title={\href{https://doi.org/10.1007/BF01695625}{Generators for topological entropy and expansivity}},
journal={Math. Syst. Theory},
volume={3},
year={1969},
pages={51--59}}


\bib{LiW00}{article}{
author={E. Lindenstrauss},
author={B. Weiss},
title={\href{https://doi.org/10.1007/BF02810577}{Mean topological dimension}},
journal={Isr. J. Math.},
volume={115},
pages={1--24},
year={2000}}




\bib{Utz50}{article}{
author={Utz, W. R.},
title={\href{https://doi.org/10.2307/2031982}{Unstable homeomorphisms}},
journal={Proc. Amer. Math. Soc.},
volume={1},
number={6},
pages={769--774},
year={1950}}




\bib{Wei74}{article}{
author={M. D. Weiss},
title={\href{https://doi.org/10.1007/BF01762672}{Algebraic and other entropies of group endomorphisms}},
journal={Math. Syst. Theory},
volume={8},
number={3},
pages={243--248},
year={1974}}


\bib{Wil11}{article}{
  title={\href{https://doi.org/10.1017/etds.2012.172}{The nub of an automorphism of a totally disconnected, locally compact group}},
  author={G. A. Willis},
  journal={Ergod. Theory Dyn. Syst.},
  year={2011},
  volume={34},
  pages={1365--1394},
}


\bib{Wu_66}{article}{
author={T. S. Wu},
title={Expansive automorphisms in compact groups}, volume={18},
journal={Math. Scand.},
pages={23–24},
year={1966}}


\end{biblist}
\end{bibdiv}
\end{document}